\g@addto@macro\th@definition{\thm@headpunct{\textnormal{.}}}
\def\ps@pprintTitle{%
 \let\@oddhead\@empty
 \let\@evenhead\@empty
 \def\@oddfoot{\centerline{\thepage}}%
 \let\@evenfoot\@oddfoot}
\DeclareMathOperator{\cl}{cl}
\DeclareMathOperator{\ord}{ord}
\numberwithin{equation}{section}
\date{\today}
\newtheorem{ut}{Theorem}
\newtheorem{ul}{Lemma}
\newtheorem{uq}{Question}
\theoremstyle{remark}
\newtheorem*{ur}{Remark}
\numberwithin{equation}{subsection}
\begin{document}

\title{Embedding irreducible connected sets}
\author{David Sumner Lipham}
\address{Department of Mathematics and Statistics, Auburn University, Auburn,
AL 36849, United States of America}
\email{dsl0003@auburn.edu}

\maketitle

\renewcommand{\thefootnote}{}

\footnote{2010 \emph{Mathematics Subject Classification}: 54F15, 54G15.}

\footnote{\emph{Key words and phrases}: irreducible, indecomposable, continuum, widely-connected.}

\renewcommand{\thefootnote}{\arabic{footnote}}
\setcounter{footnote}{0}
\vspace{-17mm}
\begin{abstract}
We show that every connected set $X$ which is irreducible between two points $a$ and $b$ embeds into the Hilbert cube in a way that $X\cup \{c\}$ is irreducible between $a$ and $b$ for every point $c$ in the closure of $X$.  Also, a connected set $X$ is indecomposable if and only if for every compactum $Y\supseteq X$ and $a\in X$ there are two points $b,c\in \overline X$ such that $X\cup \{b,c\}$ is irreducible between every two points from $\{a,b,c\}$.   Following the proofs,  we illustrate a cube embedding of the main example from  \textit{On indecomposability of $\beta X$}.   We  prove the example embeds into the plane.
\end{abstract}

\section{Introduction}

 The first half of this paper concerns one and two-point  extensions of connected sets.  All spaces are assumed to be separable and metrizable. 

Given a connected set $X$ and points $p,q\in X$, then $X$ is \textit{reducible between $p$ and $q$} if there is a connected $C\subseteq X$ with $\{p,q\}\subseteq C $ and $\overline C\neq X$.  If there is no such $C$,  then $X$ is \textit{irreducible between $p$ and $q$}. 

For a connected set $X\cup \{p\}$ to be irreducible between two points  $x\in X$ and $p$, it is clearly necessary that $p$ miss the closure of every proper component of $X$ which contains $x$. By $C$\textit{ is a proper component of }$X$, we mean there is a proper closed $A\subsetneq X$ such that $C$ is a connected component of $A$. This condition, however,  is not sufficient. Example 1 of \cite{lip2} describes a connected set $X\subseteq \mathbb R ^3$ such that every proper component of $X$ is degenerate, yet  $X\cup \{\langle 0,0,0\rangle\}$ is reducible.\footnote{This $X$ was denoted $q[W]$ in \cite{lip2}. The mapping $q\restriction W$ was a homeomorphic embedding of the widely-connected plane set  $W=W[X_3]$ from \cite{lip}.}  

 We prove the following.

 \begin{ut} \sloppy{For every connected set $X$ and  $x\in X$, there is a homeomorphic embedding $e:X\hookrightarrow [0,1]^\omega$ such that $e[X]\cup \{p\}$ is irreducible between $e(x)$ and $p$  for every point $p\in {\overline{e[X]}}$ which is not in the closure of any proper component of $e[X]$ containing $e(x)$.}
 \end{ut}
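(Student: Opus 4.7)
The plan is to find an embedding $e\colon X\hookrightarrow[0,1]^\omega$ satisfying the following property, which implies the theorem at once:
\[
(\star_n)\qquad \overline{e[K_n]}^{[0,1]^\omega}\setminus e[X]\;\subseteq\; \overline{e[\kappa_n]}^{[0,1]^\omega}
\]
for each $K_n := X\setminus U_n$, where $\{U_n\}$ is a fixed countable base of $X$ with $x\notin U_n$, and $\kappa_n := \kappa(K_n)$ is the component of $K_n$ containing $x$. This property suffices: suppose $p\in\overline{e[X]}\setminus e[X]$ is not in the closure of any proper component of $e[X]$ containing $e(x)$, yet $e[X]\cup\{p\}$ is reducible between $e(x)$ and $p$ via a connected witness $D^*$. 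Putting $D'=D^*\setminus\{p\}$ and $K=\overline{D'}^{e[X]}$, one gets a proper closed subset of $e[X]$ containing $e(x)$ with $p\in\overline{e[K]}$. Since the family $\{K_n\}$ is cofinal among such proper closed sets under inclusion, $K\subseteq e[K_n]$ for some $n$, so $p\in\overline{e[K_n]}$; by $(\star_n)$, $p\in\overline{e[\kappa_n]}$, and $\kappa_n$ is a proper component of $e[X]$ containing $e(x)$, contradicting the hypothesis on $p$.

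To construct $e$, fix any initial embedding $e_0\colon X\hookrightarrow[0,1]^\omega$. Each $\kappa_n$ is closed in $X$ (components of closed sets are closed), so for a countable dense sequence $\{y_{n,m}\}_m\subseteq X\setminus\kappa_n$, Urysohn's lemma produces continuous $f_{n,m}\colon X\to[0,1]$ with $f_{n,m}(\kappa_n)=0$ and $f_{n,m}(y_{n,m})=1$. Set
\[
e(y) \;=\; \bigl(e_0(y),\,(f_{n,m}(y))_{n,m\in\omega}\bigr).
\]
This is a homeomorphic embedding. The verification of $(\star_n)$ proceeds by taking $p\in\overline{e[K_n]}\setminus e[X]$ and a sequence $z_i\in K_n$ with $e(z_i)\to p$, reading the $(n,m)$-coordinate of $p$ as $\lim_i f_{n,m}(z_i)$, and arguing via the density of $\{y_{n,m}\}$ in $X\setminus\kappa_n$ that if $p\notin\overline{e[\kappa_n]}$ then some $f_{n,m}$-coordinate must be bounded away from $0$, producing the needed contradiction.

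The main obstacle is precisely this last step: the Urysohn coordinates only pin $p$ into the closure of the common zero-set $Z_n:=\bigcap_m f_{n,m}^{-1}(0)$, and in a non-locally-connected $X$ this set can genuinely exceed $\kappa_n$, agreeing at best with the quasi-component of $x$ in $K_n$. A limit $p$ approached through a component of $K_n$ disjoint from $\kappa_n$ may satisfy all coordinate constraints, yet not lie in $\overline{e[\kappa_n]}$; moreover the hypothesis of the theorem does not forbid this, because such other components of $K_n$ do not contain $x$ and are therefore not in the excluded family of proper components. Closing this classical component-versus-quasi-component gap is the technical heart of the proof. I expect the resolution to involve enriching the coordinate family: use a nested sequence $V_1\supseteq V_2\supseteq\cdots$ of open $X$-neighborhoods of $\kappa_n$ with $\bigcap_k V_k=\kappa_n$ (available by metrizability), and adjoin Tietze extensions keyed to $(\kappa_n,X\setminus V_k)$, so that any $p$ obeying every coordinate constraint is the limit of points eventually trapped in every $V_k$, hence truly a limit of $e[\kappa_n]$.
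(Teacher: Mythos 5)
Your reduction of the theorem to a property of the embedding is fine, but the property you chose, $(\star_n)$, is strictly stronger than what is needed and is in fact unachievable for exactly the spaces this theorem is aimed at. If $X$ is a nondegenerate widely-connected set (e.g.\ the set $W$ used later in this paper, or any connected Bernstein subset of the bucket-handle), then every component of every proper closed subset of $X$ is a singleton, so $\kappa_n=\{x\}$ and $\overline{e[\kappa_n]}=\{e(x)\}\subseteq e[X]$; hence $(\star_n)$ forces $\overline{e[K_n]}\subseteq e[X]$, i.e.\ $K_n=X\setminus U_n$ is compact (and, being a proper closed set with only degenerate components, zero-dimensional). Choosing two disjoint basic sets $U_n,U_m$ missing $x$ and applying this to both writes $X=K_n\cup K_m$ as a union of two compact zero-dimensional closed sets, making $X$ compact and zero-dimensional --- contradicting connectedness. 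So no embedding whatsoever satisfies $(\star_n)$ for all $n$ in these cases, and the difficulties you run into in the verification are symptoms of aiming at a false target: a remainder point $p$ can perfectly well be a limit of $e[K_n]$ through components of $K_n$ other than $\kappa_n$ (then your Urysohn coordinates $f_{n,m}$, and likewise the coordinates keyed to the neighborhoods $V_k$, carry no information about $p$, since the approximating points need never enter $V_k$), and the theorem must nevertheless handle such $p$.

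What the correct argument needs is a weaker conclusion: not ``$p\in\overline{e[\kappa_n]}$'' but ``$p$ cannot be joined to $e(x)$ inside $e[K_n]\cup\{p\}$ unless $p\in\overline{e[\kappa_n]}$.'' The paper gets this by replacing your single Urysohn family with a transfinite refinement of it: Lemma 1 produces, for each $n$, a decreasing sequence of closed sets $A^\beta_n$ (each $A^\beta_n$ relatively clopen in $X^\beta_n=\bigcap_{\alpha<\beta}A^\alpha_n$) whose intersection after countably many steps is exactly the component $C_n(x)=\kappa_n$; this is precisely the device that closes the quasi-component-versus-component gap you correctly identified, since iterating clopen separations transfinitely (but countably) often does reach the component. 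The embedding is then chosen --- via countably many Urysohn-type maps, exactly in the spirit of your construction --- so that $\overline{e[A^\beta_n]}\cap\overline{e[X^\beta_n\setminus A^\beta_n]}=\varnothing$ for all $n$ and all $\beta$ below the (countable) supremum of the ordinals $\ord(x,X\setminus U_n)$. Lemma 2 then shows that any $p$ joined to $e(x)$ in $e[K_n]\cup\{p\}$ but outside $\overline{e[\kappa_n]}$ would have to lie in one of these (empty) intersections of closures. Your patch with nested neighborhoods $V_k$ of $\kappa_n$ does not substitute for this: membership of approximating points in $V_k$ (a neighborhood in $X$) says nothing about where their limit sits in the compactification, because $\overline{e[V_k]}$ need not shrink to $\overline{e[\kappa_n]}$; and no choice of countably many real-valued coordinates with common zero-set $\kappa_n$ can, by itself, distinguish $\kappa_n$ from the quasi-component when $\ord(x,K_n)>\omega$.
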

 
 
 \begin{ut} If  $X$ is  irreducible between two points $a$ and $b$, there is a homeomorphic embedding $e:X\hookrightarrow [0,1]^\omega$  such that $e[X]\cup \{c\}$ is irreducible between $e(a)$ and $e(b)$ for every  $c\in \overline{e[X]}$.\footnote{It is also possible to obtain $\dim({\overline{e[X]}})=\dim(X)$  in Theorems 1 and 2.}  \end{ut}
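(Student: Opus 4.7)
Apply Theorem 1 twice—with $x=a$ and with $x=b$—to obtain embeddings $e_a,e_b:X\hookrightarrow[0,1]^\omega$, and form the diagonal embedding
\[
e = (e_a,e_b) : X \hookrightarrow [0,1]^\omega\times[0,1]^\omega \cong [0,1]^\omega,
\]
which is itself a homeomorphic embedding. I claim that $e$ has the desired property.

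\textbf{Structural analysis of a reducibility witness.} Fix $c\in\overline{e[X]}$ and assume toward a contradiction that a connected $C\subseteq e[X]\cup\{c\}$ containing $e(a),e(b)$ has $\overline{C}^{e[X]\cup\{c\}}\neq e[X]\cup\{c\}$. If $c\notin C$, then $C\subseteq e[X]$ is connected and contains both $e(a)$ and $e(b)$; by the irreducibility of $X$ between $a$ and $b$ it is dense in $e[X]$, hence (as $c\in\overline{e[X]}$) in $e[X]\cup\{c\}$, a contradiction. So $c\in C$. A standard quasi-component/component argument, using connectedness of $C$ at the cut-point $c$, yields connected subsets $K_a\ni e(a)$ and $K_b\ni e(b)$ of $C\setminus\{c\}$ with $c\in\overline{K_a}^C\cap\overline{K_b}^C$. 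Irreducibility of $X$ forces $K_a\neq K_b$ (a common connected piece containing both $e(a),e(b)$ would be dense in $e[X]$, contradicting $\overline{C}\neq e[X]\cup\{c\}$) and further forces $\overline{K_a}^{e[X]}$ to be a proper closed connected subset of $e[X]$, hence a proper component of $e[X]$ containing $e(a)$ but not $e(b)$. Under $e^{-1}$ this corresponds to a proper component $Q_a\subsetneq X$ with $a\in Q_a$, $b\notin Q_a$, and $c\in\overline{e[Q_a]}^{[0,1]^\omega}$; symmetrically one obtains $Q_b\subsetneq X$ with $b\in Q_b$, $a\notin Q_b$, and $c\in\overline{e[Q_b]}^{[0,1]^\omega}$. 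Since $\overline{C}\supseteq e[Q_a]\cup e[Q_b]\cup\{c\}$, reducibility forces $Q_a\cup Q_b\subsetneq X$, and connectedness of $X$ then forces $Q_a\cap Q_b=\emptyset$.

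\textbf{Closing the argument; main obstacle.} Writing $c=(c_a,c_b)$ and projecting, we have $c_a\in\overline{e_a[Q_a]}^{[0,1]^\omega}$ and $c_b\in\overline{e_b[Q_b]}^{[0,1]^\omega}$—precisely the configurations that Theorem 1 (applied to $e_a$ with $x=a$, respectively to $e_b$ with $x=b$) is designed to avoid. If $c_a$ were good for $e_a$, Theorem 1 would give that $e_a[X]\cup\{c_a\}$ is irreducible between $e_a(a)$ and $c_a$; but the closed, connected, proper subset $e_a[Q_a]\cup\{c_a\}\subseteq e_a[X]\cup\{c_a\}$ containing $e_a(a)$ and $c_a$ would then be forced to be dense, a contradiction. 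The main obstacle is that our $c_a$ is in fact \emph{bad} for $e_a$ (it lies in $\overline{e_a[Q_a]}$). I expect the resolution to exploit both embeddings at once: using the disjointness $Q_a\cap Q_b=\emptyset$ together with the joint-convergence structure of the product embedding, one must show that Theorem 1 applied in at least one of the two factors is nevertheless violated. Making this step precise—i.e.\ showing that $c$ being a joint limit from both $Q_a$ and $Q_b$ in $[0,1]^\omega\times[0,1]^\omega$ is incompatible with the independent Theorem 1 guarantees on $e_a$ and $e_b$—is the principal technical step of the proof.
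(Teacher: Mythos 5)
Your proposal is incomplete, and the gap you flag at the end is not merely technical: it cannot be closed from the statement of Theorem 1 alone. Theorem 1 only constrains points $p$ that lie outside the closure of every proper component of $e[X]$ containing the marked point; it says nothing about how the closures of \emph{different} components sit relative to one another. In your final configuration $c_a\in\overline{e_a[Q_a]}$ and $c_b\in\overline{e_b[Q_b]}$ with $Q_a\ni a$, $Q_b\ni b$, so both coordinates are ``bad'' points for the respective embeddings, and the two Theorem 1 guarantees are simply silent; nothing in the diagonal construction prevents a single point of $\overline{e[X]}$ from being a joint limit of $e[Q_a]$ and of $e[Q_b]$. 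What the paper does instead is rerun the \emph{construction} underlying Theorem 1 simultaneously for $a$ and $b$ and impose one extra requirement on the embedding, namely $\overline{e[C_n(a)]}\cap\overline{e[C_n(b)]}=\varnothing$ for all $n$ (condition (3.2.2)), which is achievable precisely because irreducibility of $X$ between $a$ and $b$ forces $C_n(a)=C(a,X\setminus U_n)$ and $C_n(b)$ to be disjoint closed subsets of $X$. With that separation plus the level-by-level separations (3.2.1), Lemma 2 shows that for every $c\in\overline{e[X]}$ and every $n$ the components of $e(a)$ and $e(b)$ in $e[X\setminus U_n]\cup\{c\}$ are disjoint, which is what irreducibility needs. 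So the missing ingredient is a property of the embedding that must be built in, not deduced afterwards.

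A second, quieter gap sits in your ``structural analysis.'' From a reducibility witness $C\ni e(a),e(b),c$ you extract connected sets $K_a,K_b\subseteq C\setminus\{c\}$ with $c\in\overline{K_a}\cap\overline{K_b}$, and hence proper \emph{components} $Q_a\ni a$, $Q_b\ni b$ with $c$ in the closure of their images. This step fails in general: in a connected set of Knaster--Kuratowski type, $C\setminus\{c\}$ can be hereditarily disconnected, and no connected subset of $C\setminus\{c\}$ need accumulate at $c$; the introduction's discussion of \cite[Example 1]{lip2} is exactly about this phenomenon. A point can glue a reducibility witness together without lying in the closure of any proper component containing $a$ --- it may only lie in a set of the form $\overline{A^\beta_n}\cap\overline{X^\beta_n\setminus A^\beta_n}$ from the transfinite decomposition of Lemma 1. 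This is why the paper works with component ordinals and Lemma 2 rather than with components directly, and why the embedding must separate the pairs $A^\beta_n(x)$, $X^\beta_n(x)\setminus A^\beta_n(x)$ at every level $\beta$ for both $x=a$ and $x=b$.
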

 
\begin{wrapfigure}[14]{R}{0.45\textwidth}
\centering\vspace{-0mm}
  \includegraphics[scale=0.45]{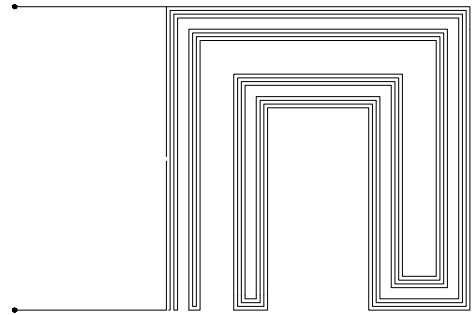} 
  \caption{$\mathfrak I\subseteq \mathbb R ^2$}
  \label{fiffe}
\end{wrapfigure}
The sharpness of Theorem 2 is demonstrated by the following example.  Let  $\mathfrak I\subseteq \mathbb R ^2$  be the rectilinear bucket-handle continuum spanning $[0,2]^2$, minus the point $\langle 0,1\rangle$, plus the two arcs $ [-1,0]\times \{0\}$ and $ [-1,0]\times \{2\}$. This connected set is locally compact and irreducible (between the two left-most points $\langle -1,0\rangle$ and $\langle -1,2\rangle$), yet its one-point compactification is reducible.\footnote{For an indecomposable example with similar properties, see the remark after   \cite[Example 1]{lip2}.}  This shows $e$ cannot be  arbitrary. Also,  the entire $\overline{e[\mathfrak I]}$ is reducible (between every two points of $\overline{e[\mathfrak I]}$) for every embedding $e$.  Thus,  adding too many limit points can destroy irreducibility regardless of the embedding. 

Note that every one-point extension of $\mathfrak I\setminus (\{0\}\times [\sfrac{1}{2},\sfrac{3}{2}])\simeq\mathfrak I$  is irreducible.

Following the proofs of Theorems 1 and 2, we generalize  an elementary result from continuum theory. A continuum $X$ is indecomposable if and only if there are three points $\{a,b,c\}\subseteq X$ such that $X$ is irreducible between every two points from $\{a,b,c\}$ (\cite[Corollary 11.20]{nad}, also \cite[\S48 VI Theorem 7$^\prime$]{kur}). 

\begin{ut}A connected separable metric space $X$ is indecomposable if and only if for every compactum $Y\supseteq X$ and $a\in X$ there are two points $b,c\in \overline X$ such that $X\cup \{b,c\}$ is irreducible between every two points from $\{a,b,c\}$.  \end{ut}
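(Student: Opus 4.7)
For the reverse direction I would proceed by contrapositive. Suppose $X=A\cup B$ with $A,B$ proper closed connected subsets of $X$; since $B\subsetneq X$ gives $A\setminus B\neq\emptyset$, pick $a\in A\setminus B$. Given any compactum $Y\supseteq X$ and any $b,c\in\overline X=\overline A\cup\overline B$ (closures in $Y$), I would run a case analysis on the membership of $b$ and $c$ in $\overline A$ versus $\overline B$. In each of the four cases one of $A\cup\{b,c\}$, $A\cup\{b\}$, $A\cup\{c\}$, or $B\cup\{b,c\}$ is a proper closed connected subset of $X\cup\{b,c\}$ containing two of $\{a,b,c\}$, and therefore violates the corresponding irreducibility. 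So for this $a$ no pair $b,c$ satisfies the three conditions.

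For the forward direction assume $X$ is indecomposable; fix a compactum $Y\supseteq X$ and a point $a\in X$, and write $Z=\overline X^Y$. I would produce $b$ first, then $c$. Define
\[
B_a=\bigcup\{\overline K^Y : K\subsetneq X\text{ closed connected},\ a\in K\}.
\]
A component analysis shows that for $b\in Z\setminus\{a\}$ the extension $X\cup\{b\}$ is reducible between $a$ and $b$ exactly when $b\in B_a$: if $C\subsetneq X\cup\{b\}$ is proper closed connected with $a,b\in C$, then the component of $a$ in $C\cap X$ is a proper closed connected subset of $X$ with $b$ in its closure, and conversely. Since $\overline K^Y\cap X=K\subsetneq X$, each $\overline K^Y$ is a proper subcontinuum of $Z$ through $a$, so $B_a\subseteq\kappa_Z(a)$, the composant of $a$ in $Z$. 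When $Z$ is itself indecomposable, Baire's theorem in $Z$ yields $\kappa_Z(a)\subsetneq Z$ and any $b\in Z\setminus B_a$ works.

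With such a $b$ chosen, I would produce $c$ by a parallel argument. Three bad sets control the three possible failures of irreducibility inside $X\cup\{b,c\}$: the original $B_a$ (for the pair $a,c$); an analogue $B_b$ obtained by replacing ``$a\in K$'' with ``$b\in\overline K^Y$'' (for the pair $b,c$); and a third set describing the multi-component ``chains'' $K_1\cup K_2\cup\{b,c\}$ by which adding $c$ could reconnect $a$ to $b$ and destroy the irreducibility secured in the first step. Each is again a union of proper subcontinua of $Z$ through a specified point, hence contained in a composant of $Z$. A $c$ in $Z$ avoiding the union of the three bad sets then gives all three irreducibilities simultaneously.

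The main obstacle is the case when $Z$ is decomposable, which can occur even when $X$ is indecomposable: a widely-connected indecomposable $X$ may sit densely in $[0,1]^\omega$, and every composant of $[0,1]^\omega$ equals the whole space, so the composant bound above is useless. In that regime I would argue directly from the indecomposability of $X$. Supposing $B_a=Z$, every $z\in Z$ lies in some $\overline K^Y$ with $K\subsetneq X$ closed connected through $a$, and I expect to extract, by a selection and Lindel\"of argument on this closed cover of $Z$, two proper closed connected subsets $A',B'\subseteq X$ whose union is $X$, contradicting indecomposability. This substitute for the Baire-category step available in the classical continuum proof is where I anticipate most of the work, with the component-tracking techniques from the proofs of Theorems 1 and 2 likely playing a central role.
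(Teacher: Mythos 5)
Your reverse direction is correct and essentially identical to the paper's: when $X$ is the union of two proper closed connected sets, two of any three points of $\overline X$ lie in the closure of the same piece, and that piece together with those points is a non-dense connected subset of $X\cup\{b,c\}$ containing two of $\{a,b,c\}$.

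The forward direction, however, hinges on a false claim. You assert that for $b\in\overline X$ the extension $X\cup\{b\}$ is reducible between $a$ and $b$ exactly when $b\in B_a=\bigcup\{\overline K : K\subsetneq X \text{ closed connected},\ a\in K\}$, on the grounds that if $C$ is a non-dense closed connected subset of $X\cup\{b\}$ containing $a$ and $b$, then the component of $a$ in $C\setminus\{b\}$ has $b$ in its closure. That boundary-bumping step is valid when $C$ is compact, but not for an arbitrary connected separable metric $C$: delete the apex $b$ from the Knaster--Kuratowski fan and every component of what remains is a singleton, most of which do not have $b$ in their closure. The paper's introduction points at exactly this phenomenon --- avoiding the closures of the proper components of $X$ through $a$ is necessary but \emph{not} sufficient for irreducibility of a one-point extension (Example 1 of \cite{lip2}) --- and the component-ordinal machinery of Lemmas 1 and 2 exists precisely to repair it. In the paper's proof the bad set attached to $a$ is the \emph{countable} union of the sets $\overline{C_n(a)}$ (nowhere dense in $\overline X$ because $X$ is indecomposable) together with the bridge sets $\overline{A^\beta_n(a)}\cap\overline{X^\beta_n(a)\setminus A^\beta_n(a)}$ (closed and disjoint from the dense set $X$, hence nowhere dense), so Baire category in the compactum $\overline X$ produces $b$ and $c$ with no hypothesis whatever on $\overline X$; this also dissolves the decomposable-$Z$ obstacle you anticipate, which in your setup is genuine, since your $B_a$ is an uncountable union to which neither Baire nor a composant bound applies. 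Finally, the third irreducibility is not obtained in the paper from a third bad set (your sketch of one would need the same false characterization again); instead the proof splits into cases: if $X$ is reducible, a non-dense connected set through $a$ meeting a hypothetical non-dense connected set containing $b$ and $c$ yields, by nowhere density, a contradiction with the already-established irreducibility between $a$ and $b$; if $X$ is irreducible, one chooses $b\in X$ so that $X$ itself is irreducible between $a$ and $b$ (possible by indecomposability) and then only the single point $c$ need be added.
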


\noindent Here,  $X$ is \textit{indecomposable} means that  $X$ cannot be written as the union of two proper components. This is equivalent to saying $X$ is the only closed connected subset of $X$ with non-void interior  \cite[\S48 V Theorem 2]{kur}.  

In the second half of the paper we describe a connected set $\widehat W\subseteq [0,1]^3$ which has the same remarkable properties as $\widetilde W$ from \cite[Example 4]{lip2}.  Namely, $\widehat W$ is irreducible between every two of its points, but every continuum enclosing $\widehat W$ is reducible between every two points of $\widehat W$.    We indicate why $\widehat W$ has said properties in Section 4.2, after its construction in Section 4.1.  The construction technique here is similar to the one  in \cite{lip2} but is more geometric in nature. For all practical purposes $\widehat W$ is an embedding of $\widetilde W$, hence the title of Section 4: ``Embedding a special irreducible set''.  Planarity of $\widehat W$ is established in Section 4.3. 

We conclude with a brief discussion of $\leq 2$-point compactifications and some questions.

\section{Component ordinals}
In this section $X$ is a separable metric space. We introduce the notion of a \textit{component ordinal}, which will facilitate the proofs of Theorems 1 through 3.

For any point $x$ and space $X\ni x$,  let $C(x,X)$ denote the connected \textit{component} of $x$ in $X$.  That is, $C(x,X)=\bigcup \{C\subseteq X:C\text{ is connected and }x\in C\}.$ When $X$ is clear from context, we will simply write $C(x)$ for $C(x,X)$.

\begin{ul}For every $x\in X$ there is  a decreasing collection of closed sets $\{A^\alpha:\alpha<\omega_1\}$  such that  $A^0=X$, $(\bigcap_{\beta<\alpha}A^\beta)\setminus A^\alpha$ is closed for  each $\alpha<\omega_1$, and $\bigcap_{\alpha<\gamma}A^\alpha=C(x)$ for some $\gamma<\omega_1$.
\end{ul}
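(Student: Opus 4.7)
The plan is to define the sequence $\{A^\alpha:\alpha<\omega_1\}$ by transfinite recursion, at each successor stage passing to a clopen subset of the current set that contains $x$. Specifically, set $A^0=X$; at a successor stage $\alpha+1$, if $A^\alpha$ is disconnected, choose any proper clopen subset of $A^\alpha$ that contains $x$ and let $A^{\alpha+1}$ be that subset (such a subset exists whenever $A^\alpha$ is disconnected, since one side of a separation containing $x$ is clopen in $A^\alpha$), and otherwise set $A^{\alpha+1}=A^\alpha$; at a limit stage $\alpha$, let $A^\alpha=\bigcap_{\beta<\alpha}A^\beta$.

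I would then verify the three conditions in the statement. Closedness of each $A^\alpha$ in $X$ follows by transfinite induction: a clopen subset of the closed set $A^\alpha$ is closed in $A^\alpha$, hence closed in $X$, and intersections of closed sets are closed. The sequence is manifestly decreasing. The difference condition holds trivially at $\alpha=0$ and at limits (in both cases the difference is empty), and at a successor $\alpha+1$ the difference $\bigl(\bigcap_{\beta<\alpha+1}A^\beta\bigr)\setminus A^{\alpha+1}=A^\alpha\setminus A^{\alpha+1}$ is closed because $A^{\alpha+1}$ is clopen in $A^\alpha$.

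The substantive step is identifying the intersection with $C(x)$. First, by transfinite induction, $C(x)\subseteq A^\alpha$ for every $\alpha$: any clopen subset of $A^\alpha$ containing $x$ must contain the component of $x$ in $A^\alpha$, which in turn contains $C(x)$ because $C(x)\subseteq A^\alpha$ is connected and contains $x$; at limit stages one takes the intersection. Consequently, the first time some $A^\alpha$ becomes connected, it must equal $C(x)$ (being a connected subset of $X$ containing $x$, sandwiched between $C(x)$ and itself). But the recursion terminates precisely when $A^\alpha$ has no proper clopen subset containing $x$, which, as noted, is equivalent to $A^\alpha$ being connected.

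The main obstacle is showing termination at some countable $\gamma$. If termination never occurs, then $A^{\alpha+1}\subsetneq A^\alpha$ for every successor $\alpha<\omega_1$, so the map $\alpha\mapsto A^\alpha$ is strictly decreasing on $\omega_1$ and hence has uncountable image. This contradicts the Lindel\"of property of the separable metric space $X$: the complements $X\setminus A^\alpha$ would form a strictly increasing $\omega_1$-chain of open sets, yet any union of open sets in $X$ is already the union of countably many of them, forcing the chain to stabilize at some countable ordinal. Therefore the recursion reaches a connected $A^{\alpha^*}$ with $\alpha^*<\omega_1$, and we may take $\gamma=\alpha^*+1$ so that $\bigcap_{\alpha<\gamma}A^\alpha=A^{\alpha^*}=C(x)$; for $\alpha\geq\gamma$ one can simply define $A^\alpha=C(x)$, which preserves all the required properties.
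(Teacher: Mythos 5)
Your proof is correct and follows essentially the same route as the paper: transfinitely shrink to proper clopen subsets containing $x$, use the Lindel\"of property of $X$ to force stabilization at a countable stage, and show the stable set equals $C(x)$ by maximality in one direction and by the fact that clopen sets containing $x$ absorb $C(x)$ in the other. The only cosmetic differences are that you cut only at successor stages (the paper also allows cuts at limit stages of the intersection $X^\alpha$) and you prove $C(x)\subseteq A^\alpha$ by direct induction rather than by a least-counterexample contradiction.
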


\begin{proof}Set $A_0=X$. 

Suppose $0<\alpha<\omega_1$ and $A^\beta $ has been defined for all $\beta <\alpha$.  If  $X^\alpha\coloneq \bigcap_{\beta<\alpha}A^\beta$ is not connected, let $A^\alpha \ni x$   be  a relatively clopen proper subset of $X^\alpha$. Otherwise, put $A^\alpha=X^\alpha$. 

The open $\big(X\setminus \bigcap_{\alpha<\omega_1}A^\alpha\big)$-cover $\{X\setminus A^\alpha:\alpha<\omega_1\}$ has a countable subcover, 
 meaning $(A^\alpha)_{\alpha<\omega_1}$  is eventually constant. 
 That is,   $X^\gamma$  is connected for some $\gamma<\omega_1$. Apparently $X^\gamma\subseteq C(x)$ by maximality of $C(x)$.  It remains to show $C(x)\subseteq X^\gamma$.  Well, if $C(x)\not \subseteq X^\gamma$ then there is a least $\beta< \gamma$ such that $C(x)\not \subseteq A^\beta$. Then $\beta>0$, $C(x)\subseteq X^\beta$, and the closed sets $A^\beta$ and $X^\beta\setminus A^\beta$ partition $C(x)$. This is absurd because $C(x)$ is connected. Therefore $C(x)\subseteq X^\gamma$ and the proof is complete.  \end{proof}

\begin{figure}[!htb]
    \centering
    \begin{minipage}{.3\textwidth}
    \includegraphics[scale=.4]{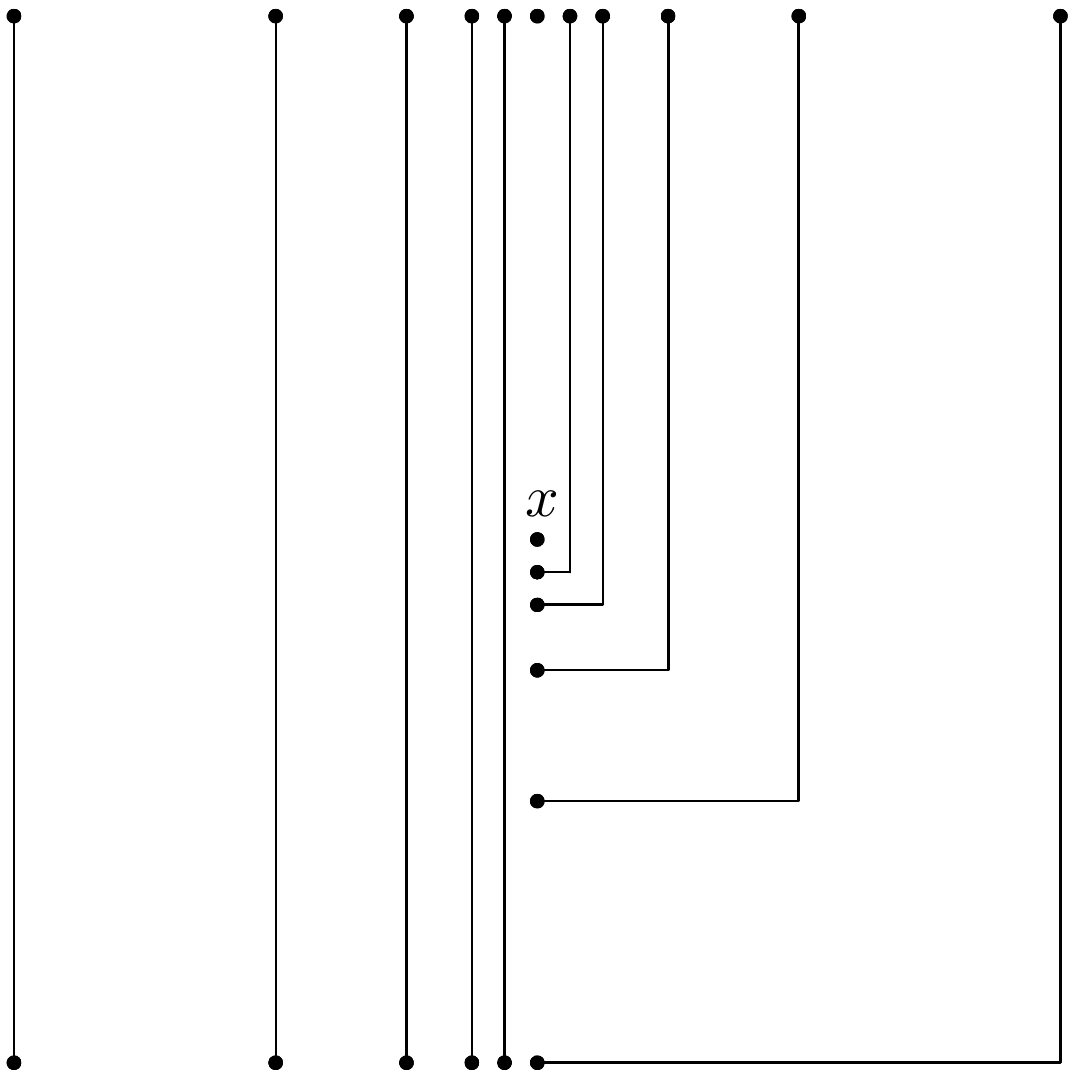}
         
              \
              
              \
              
                   \
                   
                        \
                        
                             \
                             
                                  \
                                  
                                       \
                                       
                                            \
                                            
                                                 \
                                                                                                                                                                                                                                                                                          
    \end{minipage}
    \begin{minipage}[b]{0.5\textwidth}
    \small
    
    \
    
      $\ord(x,\{x\})=\ord(x,\{x\}\cup (2\times[0,1]))=1$
      
      \

$\ord(x,[ \mathfrak X\cap ([0,\sfrac{1}{2})\times [0,1])]\cup \{x\})=\omega$

\

$\ord(x,[ \mathfrak X\cap ([0,\sfrac{1}{2})\times [0,1])]\cup\{x,\langle \sfrac{1}{2},1\rangle\})=\omega+1$

\

$\ord(x,[ \mathfrak X\cap ([0,\sfrac{1}{2}]\times [0,1])])=\omega+\omega$

\

$\ord(x, \mathfrak X\setminus \{ \langle \sfrac{1}{2},1\rangle\})=\omega+\omega$

\

$\ord(x, \mathfrak X)=\omega+\omega+1$
    \end{minipage}
    \vspace{-3.7cm}
     \caption{$\mathfrak X\subseteq [0,1]^2$ and the ordinal of $x$ in subspaces of $\mathfrak X$}
\end{figure}
Whenever $x$ is a point in a space $X$, we let $\ord(x,X)$ be the least (countable) ordinal $\gamma>0$ such that $X^\gamma=C(x)$, considering all $\omega_1$-sequences $(A^\alpha)$ from Lemma 1. 
Observe that in general,  $\ord$ is either  $1$,  a limit,  or the successor of a limit.   Also,  $C(x,X)$ is equal to the quasi-component of $x$ in $X$ if and only if $\ord(x,X)\in \{1,\omega\}$.  In Figure 2 we compute the order of   a particular point $x=\langle \sfrac{1}{2},\sfrac{1}{2}\rangle$ in a  few different subspaces of $[0,1]^2$.    In each space the component of $x$ is  $\{x\}$, but $\ord(x)$ varies.

\begin{ul}Let $\{A^\beta:\beta<\gamma\coloneq \ord(x,X)\}$ be given for a point $x\in X$ by Lemma 1.    If $X\cup \{p\}$ is a topological extension of $X$, and $p\in  C(x,X\cup \{p\})\setminus \overline {C(x)}$, then there exists $\beta<\gamma$ such that $p\in \overline {A^\beta}\cap \overline{X^\beta\setminus A^\beta}$. \end{ul}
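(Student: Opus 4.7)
The plan is to establish the contrapositive: assume that for every $\beta<\gamma$ we have $p\notin\overline{A^\beta}\cap\overline{X^\beta\setminus A^\beta}$ (closures taken in $Y:=X\cup\{p\}$), and deduce $p\notin C(x,Y)\setminus\overline{C(x)}$.

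I will run a parallel version of the Lemma~1 recursion inside $Y$, recycling the given sequence $(A^\alpha)$. For each $\alpha\le\gamma$, set $B^\alpha:=A^\alpha\cup\{p\}$ when $p\in\overline{A^\alpha}$ in $Y$, and $B^\alpha:=A^\alpha$ otherwise; let $Z^\alpha:=\bigcap_{\beta<\alpha}B^\beta$ (so $Z^0=Y$). Since the map $\alpha\mapsto\overline{A^\alpha}$ is decreasing, $p\in B^\alpha$ implies $p\in B^\beta$ for every $\beta<\alpha$, and hence $p\in Z^\alpha$, so the recursion is internally consistent. The crucial claim is that $B^\alpha$ is relatively clopen in $Z^\alpha$ at each stage $\alpha<\gamma$. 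The only delicate case arises when $p\in Z^\alpha$ and $p\in\overline{A^\alpha}$: then $Z^\alpha=X^\alpha\cup\{p\}$, $B^\alpha=A^\alpha\cup\{p\}$ is closed in $Y$, and its relative complement $X^\alpha\setminus A^\alpha$ is closed in $Y$ precisely because the contrapositive hypothesis forces $p\notin\overline{X^\alpha\setminus A^\alpha}$. All remaining cases reduce immediately to the fact that $A^\alpha$ is clopen in $X^\alpha$ by Lemma~1, together with $p\notin\overline{A^\alpha}$ where relevant.

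A standard transfinite induction then yields $C(x,Y)\subseteq B^\alpha$ for every $\alpha\le\gamma$: at successor stages, the connected set $C(x,Y)$ lies in $Z^\alpha$ and contains $x\in B^\alpha$, and any connected set in $Z^\alpha$ meeting the relatively clopen $B^\alpha$ must be contained in it; limit stages are just intersections. Hence $C(x,Y)\subseteq Z^\gamma$, which equals either $C(x)$ or $C(x)\cup\{p\}$. In either case $C(x,Y)\subseteq C(x)\cup\{p\}$, so if $p\in C(x,Y)$ then the connected set $C(x,Y)$ meets both $C(x)$ and $\{p\}$ inside $C(x)\cup\{p\}$; this forces $p\in\overline{C(x)}$, since otherwise $\{p\}$ would be clopen in $C(x)\cup\{p\}$. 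Either way $p\notin C(x,Y)\setminus\overline{C(x)}$, which is the contrapositive.

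The main obstacle is the clopen verification in the parallel recursion: showing that adjoining $p$ to $A^\alpha$ keeps $B^\alpha$ relatively clopen in $Z^\alpha$ is exactly what the hypothesis $p\notin\overline{A^\beta}\cap\overline{X^\beta\setminus A^\beta}$ supplies. Once that is in hand, the remainder is routine transfinite bookkeeping and the standard ``clopen swallows the component'' trick.
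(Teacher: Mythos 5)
Your proof is correct. It is essentially the contrapositive of the paper's argument, carried out with more machinery: you rerun the Lemma~1 recursion inside $Y=X\cup\{p\}$, replacing each $A^\alpha$ by $B^\alpha=A^\alpha\cup\{p\}$ exactly when $p\in\overline{A^\alpha}$, check that the negated conclusion keeps every $B^\alpha$ relatively clopen in $Z^\alpha$, and let these clopen sets trap $C(x,Y)$ inside $Z^\gamma\subseteq C(x)\cup\{p\}$. The paper argues directly instead: it sets $M=C(x,X\cup\{p\})\setminus\{p\}$, notes that $p\in\overline M$ together with $p\notin\overline{C(x)}$ forces $M\not\subseteq C(x)=X^\gamma$, takes the \emph{least} $\beta$ with $M\not\subseteq A^\beta$, and observes that the $X$-closed sets $A^\beta$ and $X^\beta\setminus A^\beta$ then partition $M$, so connectedness of $M\cup\{p\}$ puts $p$ in both closures. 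Both arguments rest on the same facts (each $A^\beta$ and $X^\beta\setminus A^\beta$ is closed in $X$, and the sequence decreases to $C(x)$), but the paper's version needs only one well-chosen ordinal and a single connectedness step, whereas yours verifies clopenness at every stage; in exchange, your route makes explicit the slightly stronger conclusion that under the negated hypothesis $C(x,X\cup\{p\})\subseteq C(x)\cup\{p\}$. One cosmetic point: your induction only needs, and only justifies, $C(x,Y)\subseteq B^\alpha$ for $\alpha<\gamma$, which already yields $C(x,Y)\subseteq Z^\gamma$; the case $\alpha=\gamma$ is neither needed nor covered by the hypothesis, so the phrase ``for every $\alpha\le\gamma$'' should be weakened accordingly.
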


\begin{proof}Suppose  $p\in  C(x,X\cup \{p\})\setminus \overline {C(x)}$.  

Put $M=C(x,X\cup \{p\})\setminus \{p\}$.  Then $p\in \overline M$, so $M\not\subseteq C(x)=X^\gamma$. As in the proof of Lemma 1, there is a least $\beta$ between $0$ and $\gamma$ such that $M\not\subseteq A^\beta$.   Then the $X$-closed sets $A^\beta$ and $X^\beta\setminus A^\beta$ partition $M$.  By connectedness of $C(x,X\cup \{p\})$ it must be that $p\in \overline {A^\beta}\cap \overline{X^\beta\setminus A^\beta}$.  \end{proof}

\section{Proofs of Theorems} 

Throughout, suppose $X$ is a connected separable metric space. 

Fix a basis $\{U_n:n<\omega\}$ for $X$ with each $U_n\neq\varnothing$. 

For each $n<\omega$ and $x\in X\setminus U_n$, let $\{A^\alpha_n(x):\alpha<\omega_1\}$ be given by Lemma 1,  with $A^0_n=X\setminus U_n$, $X^\alpha_n=\bigcap_{\beta<\alpha}A^\beta_n$,  and  $$X^{\ord(x,X\setminus U_n)}_n(x)=C_n(x)\coloneq C(x,X\setminus U_n).$$ 
If $x\in U_n$ then put $\ord(x,X\setminus U_n)=0$ and $C_n(x)=A^\alpha_n(x)=\varnothing$ for all $\alpha<\omega_1$. 

\subsection{Proof of Theorem 1}
Since $\gamma\coloneq \sup\{\ord(x,X\setminus U_n):n<\omega\}$ is countable, there exists  $e:X\hookrightarrow [0,1]^\omega$  such that ${\overline{e[A^\beta_n]}}\cap {\overline{e[X_n^\beta\setminus A^\beta_n]}}=\varnothing$ for every $n<\omega$ and $\beta<\gamma$.   If $p\in {\overline{e[X]}}\setminus \bigcup_{n<\omega} {\overline{e[C_n(x)]}}$, then  $p\notin \bigcup _{n<\omega}C(e(x),e[X\setminus U_n]\cup \{p\})$ by Lemma 2, so $e[X]\cup \{p\}$ is irreducible between $e(x)$ and $p$.  
\qed

\subsection{Proof of Theorem 2}

Let $a$ and $b$ be such that $X$ is irreducible between them. Put $$\xi=\sup\{\ord(x,X\setminus U_n):\langle x,n\rangle \in \{a,b\}\times\omega\},$$ and construct $e:X\hookrightarrow [0,1]^\omega$ so that 
\begin{align}&{\overline{e[A^\beta_n(x)]}}\cap {\overline{e[X^\beta_n(x)\setminus A^\beta_n(x)]}}=\varnothing\text{ for all }\langle x,n,\beta\rangle\in \{a,b\}\times \omega\times\xi\text{; and}\\
&\overline{e[C_n(a)]}\cap \overline{e[C_n(b)]}=\varnothing\text{ for all }n<\omega. 
\end{align}

Now let $c\in \overline{e[X]}$.  For any  $n<\omega$, we have 
$$C(e(a),e[X\setminus U_n]\cup \{c\})\cap C(e(b),e[X\setminus U_n]\cup \{c\})=\varnothing$$ by Lemma 2. Thus $e[X]\cup \{c\}$ is irreducible between $e(a)$ and $e(b)$. \qed
\medskip

\begin{ur}In Theorems 1 and 2 we can obtain  $\dim(\overline{e[X]})=\dim (X)$.  For if $\Phi$ is a countable family of continuous maps of $X$ into $[0,1]$, then $X$ has a $\Phi$-compactification $\overline{e[X]}$ (i.e. each member of  $\Phi$ continuously extends to $\overline{e[X]}$) with $\dim(\overline{e[X]})=\dim (X)$. See   \cite[Exercise 1.7.C]{eng3}.  

To  get ${\overline{e[A^\beta_n]}}\cap {\overline{e[X_n^\beta\setminus A^\beta_n]}}=\varnothing$ in the proof of Theorem 1, simply include a map $\varphi\in\Phi$ such that $\varphi[A^\beta_n]=0$ and $\varphi[X_n^\beta\setminus A^\beta_n]=1$. Likewise, $\Phi$ can guarantee (3.2.1) and (3.2.2)  in the proof of Theorem 2. \end{ur}

\subsection{Proof of Theorem 3} The condition is sufficient. If $X$ decomposes into two proper components  $H$ and $K$, then two of any three points $\{a,b,c\}\subseteq \overline{X}$ must belong to $\overline {H}$ or $\overline {K}$, and thus $X\cup \{a,b,c\}$  fails to have the three-point irreducible property. 

The condition is necessary.  Suppose $X$ is indecomposable.  

\textit{Case 1:} $X$ is reducible.   Fix $a\in X$.  Let $\gamma=\sup\{\ord(a,X\setminus U_n):n<\omega\}$.  We may select
$$b,c\in \overline X\setminus \bigcup \big\{\big[\overline{A^\beta_n(a)}\cap \overline{X^\beta_n(a)\setminus A^\beta_n(a)}\big]\cup \overline{C_n(a)}:\langle n,\beta\rangle\in\omega\times\gamma \big\},$$ since the sets $\overline{A^\beta_n(a)}\cap \overline{X^\beta_n(a)\setminus A^\beta_n(a)}$    and $\overline{C_n(a)}$ are closed \& nowhere dense in $\overline X$.  Then  by Lemma 2, $X\cup \{b,c\}$ is irreducible between $a$ and $b$, and $a$ and $c$.  It follows that $X\cup \{b,c\}$ is irreducible between $b$ and $c$.  For suppose $D$ is a proper closed connected subset of $X\cup \{b,c\}$ containing $b$ and $c$.  Since $X$ is reducible and  $D\cap X\neq\varnothing$, there is a closed connected $C\subsetneq X$ with $a\in C$ and $C\cap D\neq\varnothing$.  By indecomposability of $X$, $C$ is nowhere dense in $X$, whence $C\cup D\neq X$. Thus, the connected set $C\cup D$ violates the previously established irreducibility.

\textit{Case 2:} $X$ is irreducible.  Let $a$ and $b$ be such that $X$ is irreducible between them.  Let $\xi=\sup\{\ord(x,X\setminus U_n):\langle x,n\rangle\in\{a,b\}\times\omega\}$. Choose
$$c\in \overline X\setminus \bigcup \big\{\big[\overline{A_n^\beta(x)}\cap \overline{X_n^\beta(x)\setminus A_n^\beta(x)}\big]\cup \overline{C_n(x)}:
\langle x,n,\beta\rangle\in \{a,b\}\times \omega\times \xi\}.$$ Then $X\cup \{c\}$ is irreducible between $a$ and $c$, and $b$ and $c$. It follows easily that $X\cup \{c\}$ is also irreducible between $a$ and $b$. 
\qed

\section{Embedding a special irreducible set}

Here we will construct the set $\widehat W$ that  was outlined in Section 1.

 The set $\widehat W$ will be connected and irreducible between every two of its points.  The term for these properties is  \textit{widely-connected}.   One may see that a  connected set $W$  is widely-connected if and only if every subset of $W$ is either connected or hereditarily disconnected (a set is \textit{hereditarily disconnected} provided  each of its connected subsets is degenerate). 
 
 Bernstein subsets of certain continua such as the dyadic solenoid and bucket-handle are widely-connected.\footnote{For a Bernstein subset $B$ of a continuum $Y$ to be connected, every separator in $Y$ should be uncountable. For $B$ to be \textit{widely}-connected, it helps to know that every non-degenerate non-dense connected subset of $Y$  contains a non-degenerate continuum. That way, the only proper components of $B$ are degenerate.  Solenoids and bucket-handle continua satisfy all of these criteria since they have uncountably many disjoint composants, and all of their proper subcontinua are arcs.} Other examples, given in \cite{swi} and  \cite{lip},  are still less complicated than the one we are about to describe. But, like the Bernstein sets, they  fail to have the desired additional property 4.2.iii (item iii in  Section 4.2). $\widetilde W$  was the first example  with that property.

\subsection{Construction of $\widehat W$}

Let $S\subseteq[0,1]^2$ be the quinary (middle two-fifths) double bucket-handle continuum  depicted in Figure 3(a).\footnote{Parts of $S$ will play the roles of both $K$ and the orbit in $2^\mathbb Z$ from the  construction in \cite{lip2}.}  For other illustrations of $S$, see \cite{k} and \cite[\S48 V]{kur}.

\begin{figure}[h]
    \centering
    \begin{subfigure}[t]{0.3\textwidth}
        \centering
         \includegraphics[scale=0.292]{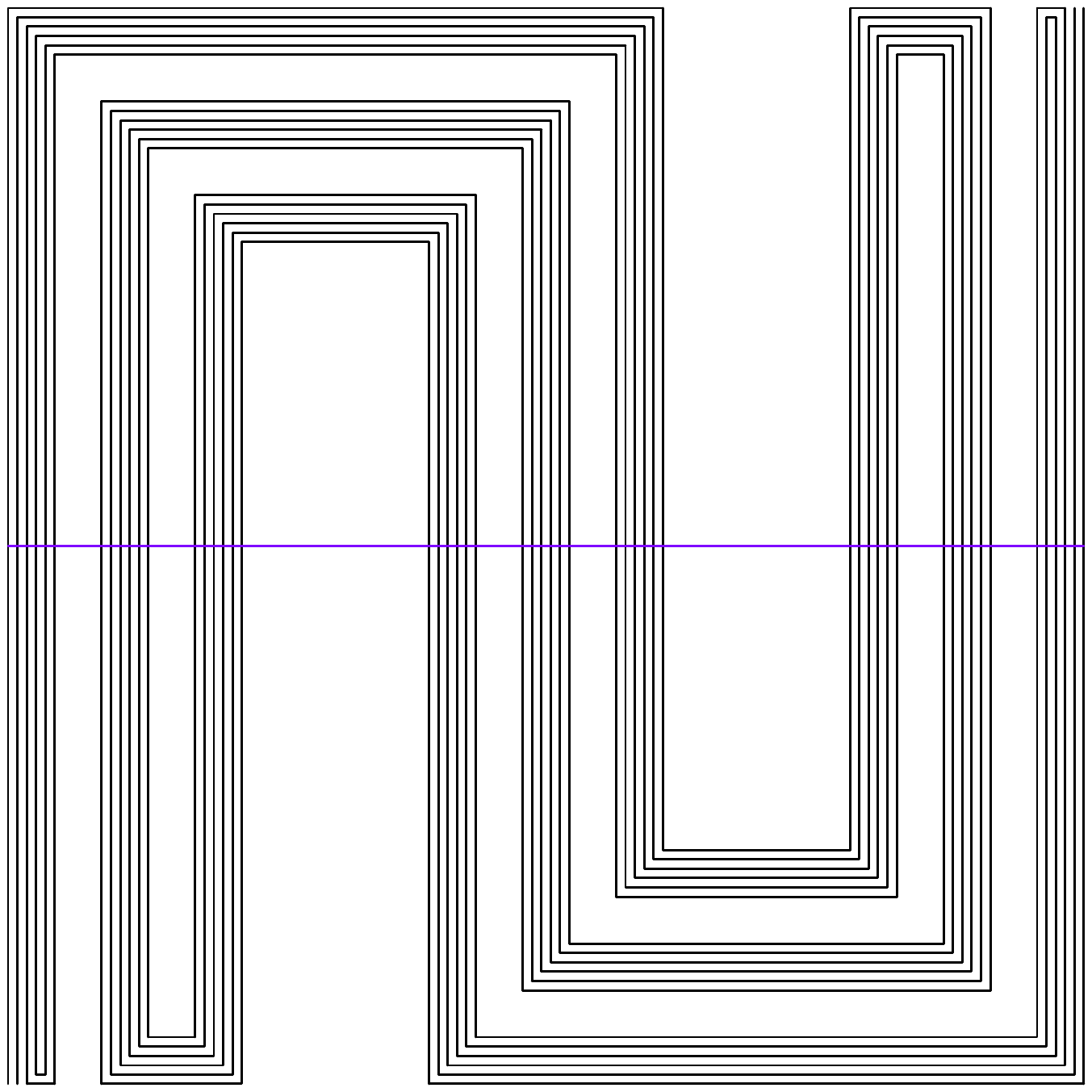}
        \caption{$S\cup (\color{violet}[0,1]\times \{\sfrac{1}{2}\}\color{black})$}
    \end{subfigure}%
    ~\hspace{.4mm}
    ~
    \begin{subfigure}[t]{0.3\textwidth}
        \centering
     \includegraphics[scale=0.3]{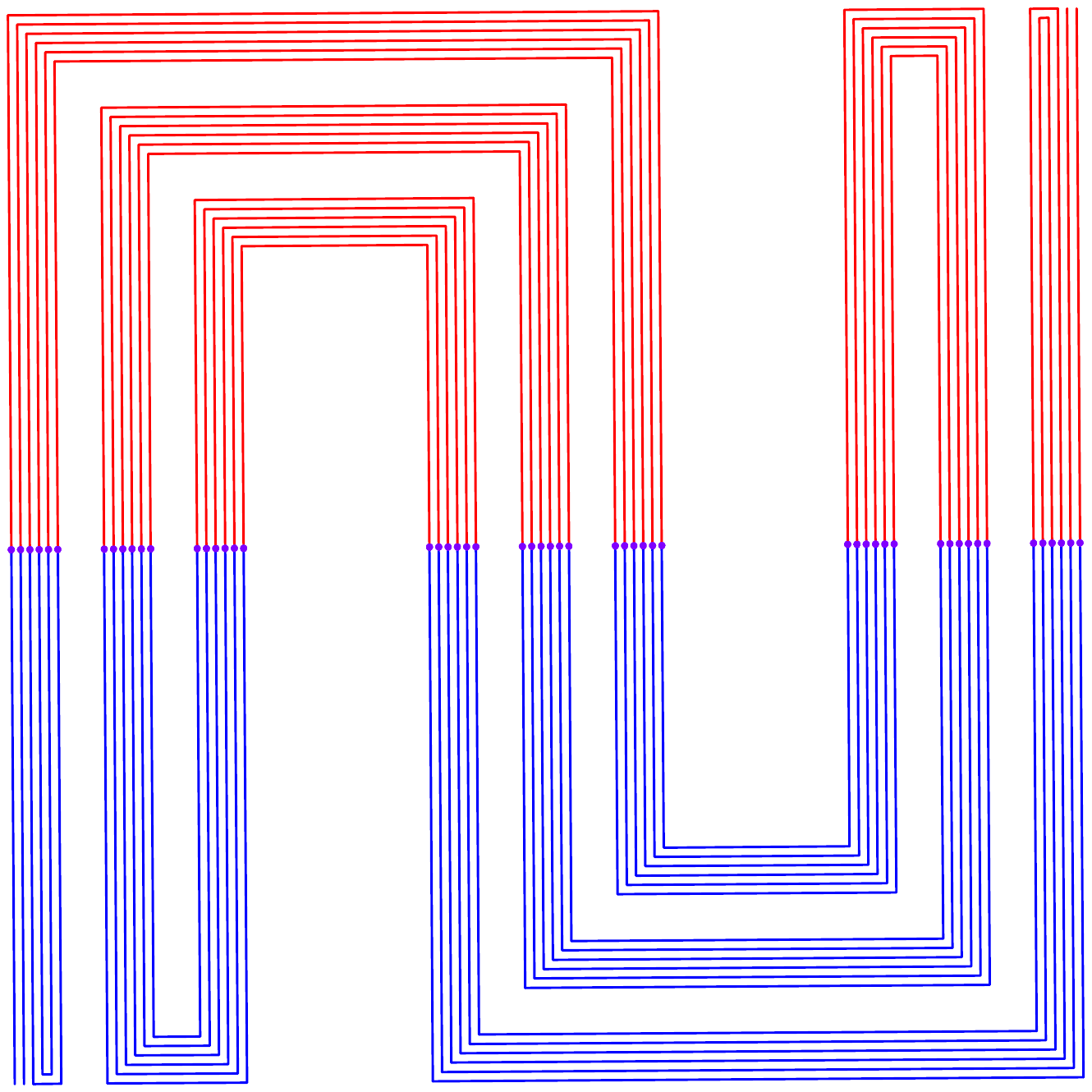}
        \caption{$\color{blue}W_0\color{black}\cup\color{red} W_1$}
    \end{subfigure}
    ~\hspace{.7mm}
    ~
       \begin{subfigure}[t]{0.3\textwidth}
        \centering
       \includegraphics[scale=0.285]{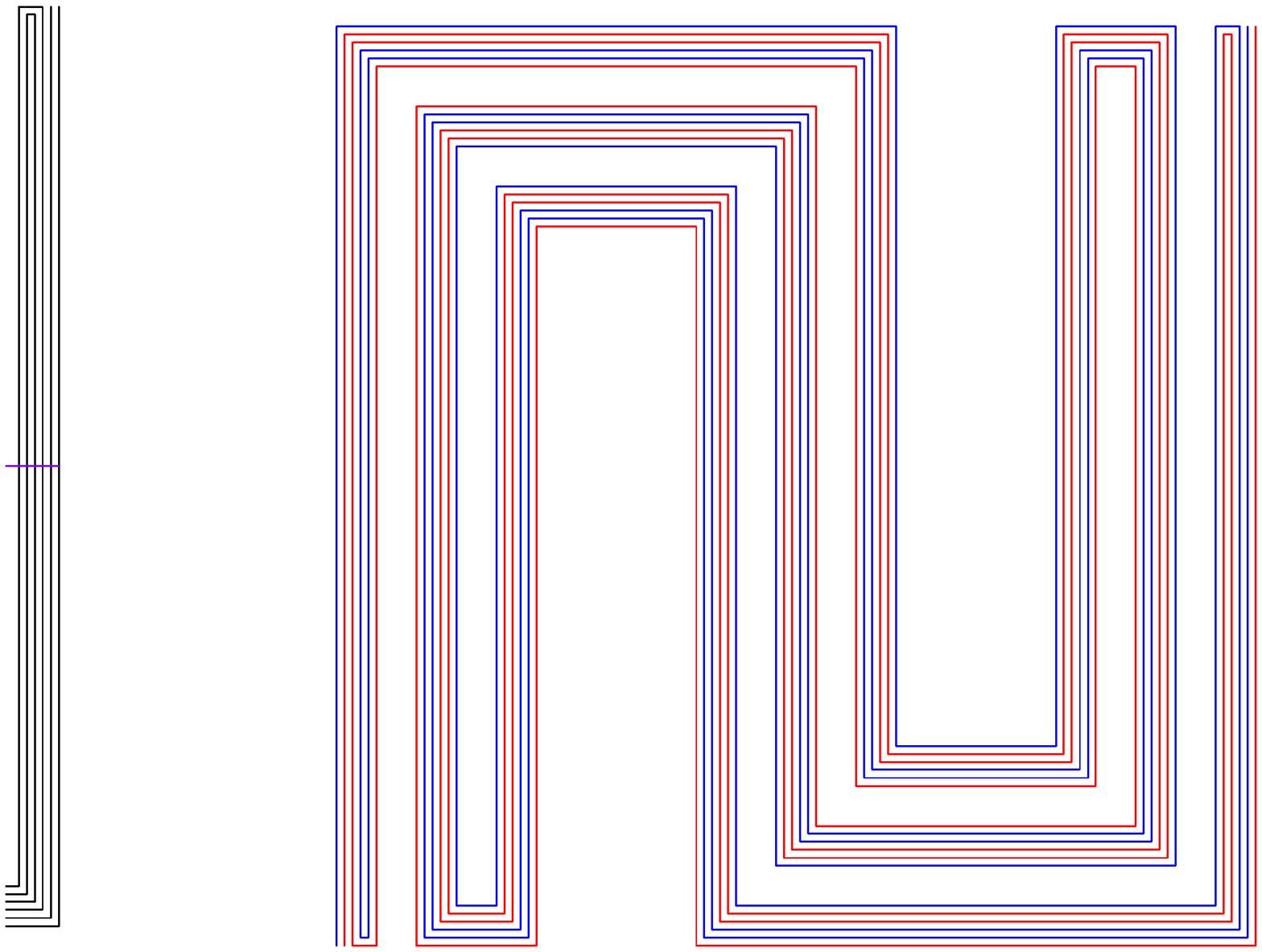}
          \caption{$\color{blue}P_0\color{black}\cup \color{red}P_1$}
    \end{subfigure}
    \caption{The double bucket-handle continuum and subsets}
\end{figure}

Let $W\supseteq S\cap( \color{violet}[0,1]\times  \{\sfrac{1}{2}\}\color{black})$ be a widely-connected subset of $S$.  

For visualization purposes,  construct $W$ as in  \cite{lip}: $S\setminus ( \color{violet}[0,1]\times  \{\sfrac{1}{2}\}\color{black})$ is the union of a countable discrete collection of sets homeomorphic to $C\times \mathbb R$,  $C$ being the middle two-fifths Cantor set. Refine each of these sets to a copy of the punctured Knaster-Kuratowski fan.  So if $c$ is an endpoint of an interval removed during the construction of $C$, then  the part of each $\{c\}\times \mathbb R$ remaining is $\{c\}\times \mathbb Q$; the other fibers become $\{c\}\times (\mathbb R\setminus \mathbb Q)$. To complete the construction of $W$, restore $S\cap( \color{violet}[0,1]\times  \{\sfrac{1}{2}\}\color{black})$.\footnote{The horizontal middle two-fifths Cantor set $S\cap (\color{violet}[0,1]\times  \{\sfrac{1}{2}\}\color{black})$ replaces the diagonal middle-thirds Cantor sets $\Delta$ and  $\Delta '$ from \cite{lip} and \cite{lip2}, respectively.}   

Let $\color{blue}W_0\color{black}=W\cap [0,\sfrac{1}{2}]$ and $\color{red}W_1=\color{black}W\cap [\sfrac{1}{2},1]$ be the lower and upper `halves' of $W$ depicted in Figure 3(b). So $W=\color{blue}{W_0}\color{black}\cup \color{red}W_1$ and $\color{blue}{W_0}\color{black}\cap\color{red}W_1\color{black}=S\cap (\color{violet}[0,1]\times \{\sfrac{1}{2}\}\color{black})$. 

The indecomposable  $S$ has two accessible  composants (maximal arcwise-connected subsets) $\color{blue}P_0$ and $\color{red}P_1$, indicated in Figure 3(c). Each is  a one-to-one continuous image of the half-line $[0,\infty)$ and is dense in $S$. Identifying their `endpoints' $\color{blue}\langle 0,0\rangle$ and $\color{red}\langle 1,1\rangle$ produces an indecomposable continuum $\dot S\coloneq S/\{\color{blue}\langle0,0\rangle\color{black},\color{red}\langle 1,1\rangle\color{black}\}$  which embeds into the plane $[0,1]^2\times \{\sfrac{1}{2}\}$ (Figure 4) and has composant  $\dot P\coloneq (\color{blue}P_0\color{black}\cup \color{red}P_1\color{black})/\{\color{blue}\langle0,0\rangle\color{black},\color{red}\langle 1,1\rangle\color{black}\}$.  

Let $f:(-\infty,\infty)\to \dot P$ be a one-to-one mapping onto $\dot P\subseteq [0,1]^2\times \{\sfrac{1}{2}\}$ such that $f\restriction [0,\infty)$ maps onto $\color{blue}P_0$ and $f\restriction (-\infty,0]$ maps onto $\color{red}P_1$. The point $f(0)$ joining the two original composants is indicated in Figure 4. 
  \begin{figure}[h]
        \centering
        \includegraphics[scale=.7]{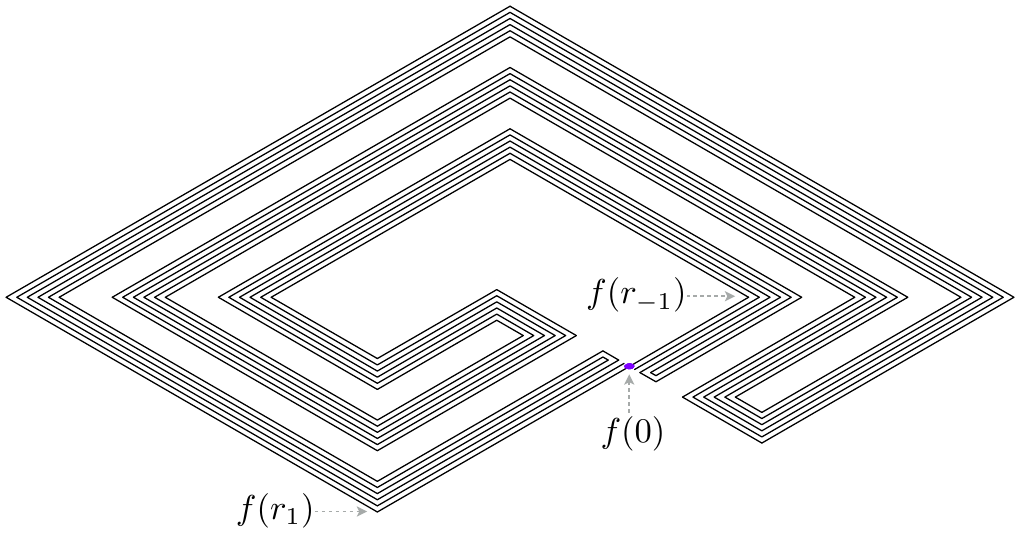} 
      \caption{$\dot S$}
        \label{fig:prob1_6_2}
 \vspace{-3mm}
\end{figure}
\begin{figure}[h]
    \centering
       \includegraphics[scale=0.47]{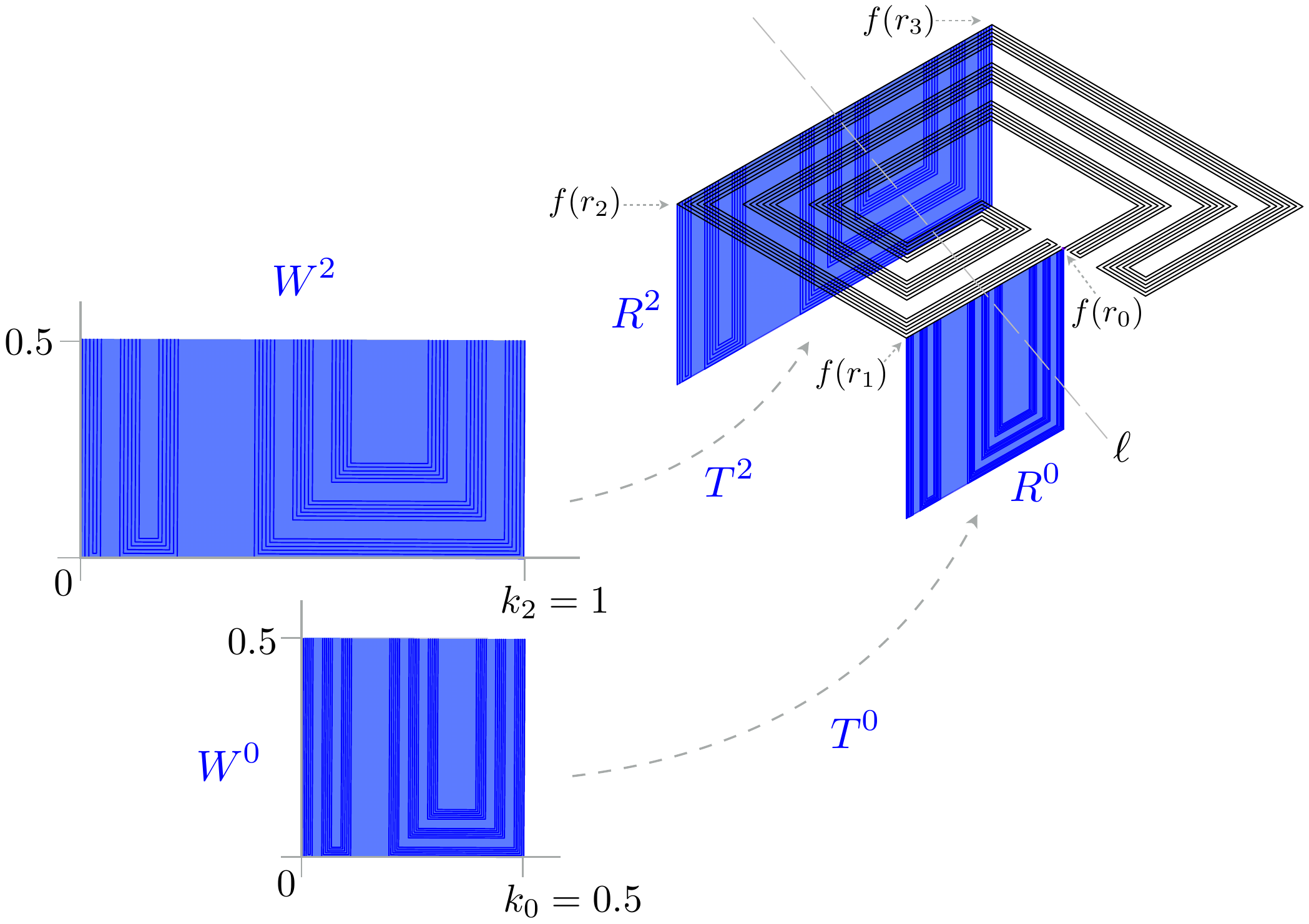}
   \caption{Attachment of parallel copies of $\color{blue}W_0$}
\end{figure}
 Let $(r_n)\in (-\infty,\infty)^\mathbb Z$ be the increasing $\mathbb Z$-enumeration of numbers in $(-\infty,\infty)$ such that $r_0=0$  and $f[r_{n-1},r_{n}]\perp f[r_{n},r_{n+1}]$ for $n\neq 0$. In  words, the points $f(r_n)$, $n\neq 0$, are the ninety-degree `corners' of $\dot P$.  


For each $n\geq 0$, let 
\begin{align*}
k_n&=\|f(r_{n})-f(r_{n+1})\|,\\
\color{blue}W^n\color{black}&=\{ \langle k_n \cdot x,y\rangle:\langle x,y\rangle\in \color{blue}W_0\color{black}\}, \text{and}\\
\color{blue}R^n\color{black}&=\{\langle x,y,z\rangle\in [0,1]^3:\langle x,y,\sfrac{1}{2}\rangle\in f[r_{n},r_{n+1}] \text{ and }z\in [0,\sfrac{1}{2}] \}.\end{align*}
Then  $\color{blue}W^n$ is a copy of $\color{blue}W_0$ horizontally scaled  by  factor  $k_n$ (the length of $f[r_n,r_{n+1}]$), and $\color{blue}R^n$ is a rectangular region extending below $f[r_n,r_{n+1}]$. For a given $n\geq 0$ there are two rigid transformations   $$\color{blue}T^n\color{black}: \color{black}[0,k_n]\times [0,\sfrac{1}{2}]\to\color{blue}R^n$$that map $[0,k_n]\times \{\sfrac{1}{2}\}$ onto $f[r_{n},r_{n+1}]$. One  transformation maps $\langle 0,\sfrac{1}{2}\rangle$  and $\langle k_n,\sfrac{1}{2}\rangle$ to $f(r_n)$ and $f(r_{n+1})$, respectively, while the other maps these points in the opposite way. By inductively choosing the `correct' $\color{blue}T^n\color{black}$, we can ensure  that whenever  $f[r_m,r_{m+1}]$  and $f[r_n,r_{n+1}]$ and are parallel ($0\leq m<n$), $\color{blue}T^n\color{black}[\color{blue}W^n\color{black}]$ and $\color{blue}T^m\color{black}[\color{blue}W^m\color{black}]$ are similarly aligned.  Specifically,   $\color{blue}T^n\color{black}\color{black}(\langle 0,\sfrac{1}{2}\rangle)$ and $\color{blue}T^m\color{black}\color{black}(\langle 0,\sfrac{1}{2}\rangle)$ always lie on the same side of the line $\ell\subseteq [0,1]^2\times \{\sfrac{1}{2}\}$ that bisects   the segments $f[r_m,r_{m+1}]$  and $f[r_n,r_{n+1}]$, and the points $\color{blue}T^n\color{black}(\langle k_n,\sfrac{1}{2}\rangle)$  and  $\color{blue}T^m\color{black}\color{black}(\langle k_m,\sfrac{1}{2}\rangle)$ lie on the other side of $\ell$. See Figure 5.

Likewise,  use rigid transformations  $\color{red}T^n\color{black}$, $n<0$, to attach horizontally scaled copies of $\color{red}W_1$  to $\color{red}P_1$.  Construct each $\color{red}T^n\color{black}$ so that $\color{red}T^n\color{black}[\color{red}W^n\color{black}\cap (\color{violet}[0,1]\times \{\sfrac{1}{2}\}\color{black})]$ is the middle two-fifths Cantor set  in  $f[r_{n},r_{n+1}]$, and $\color{red}T^n\color{black}[\color{red}W^n\color{black}]$ spans the rectangular region  $\color{red}R^n\color{black}:\approx f[r_{n},r_{n+1}]\times [\sfrac{1}{2},1]$. In the beginning,  $\color{red}T^{-1}\color{black}[\color{red}W^{-1}\color{black}]$ and $\color{red}T^{-2}\color{black}[\color{red}W^{-2}\color{black}]$ should be inversely aligned with respect to $\color{blue}T^0\color{black}[\color{blue}W^0\color{black}]$ and $\color{blue}T^1\color{black}[\color{blue}W^1\color{black}]$ (just as  $\color{red}W_1$ and $\color{blue}W_0$  are  aligned in $W$). Then,  choose the other $\color{red}T^n\color{black}$ ($n\leq-2$) so that the orientations of parallel $\color{red}T^n\color{black}[\color{red}W^n\color{black}]$ and $\color{red}T^m\color{black}[\color{red}W^m\color{black}]$ are the same.  This forces all parallel $\color{blue}T^n\color{black}[\color{blue}W^n\color{black}]$ and $\color{red}T^m\color{black}[\color{red}W^m\color{black}]$ ($n\geq 0$ and $m<0$) to be aligned as in $W$.  See Figure 7.  

Put $$\widehat W=\bigcup \{T^n[W^n]:n\in \mathbb Z\}.$$
\vspace{-7mm}
\begin{figure}[h]
    \begin{minipage}{0.5\textwidth}
        \centering
    \includegraphics[scale=0.45]{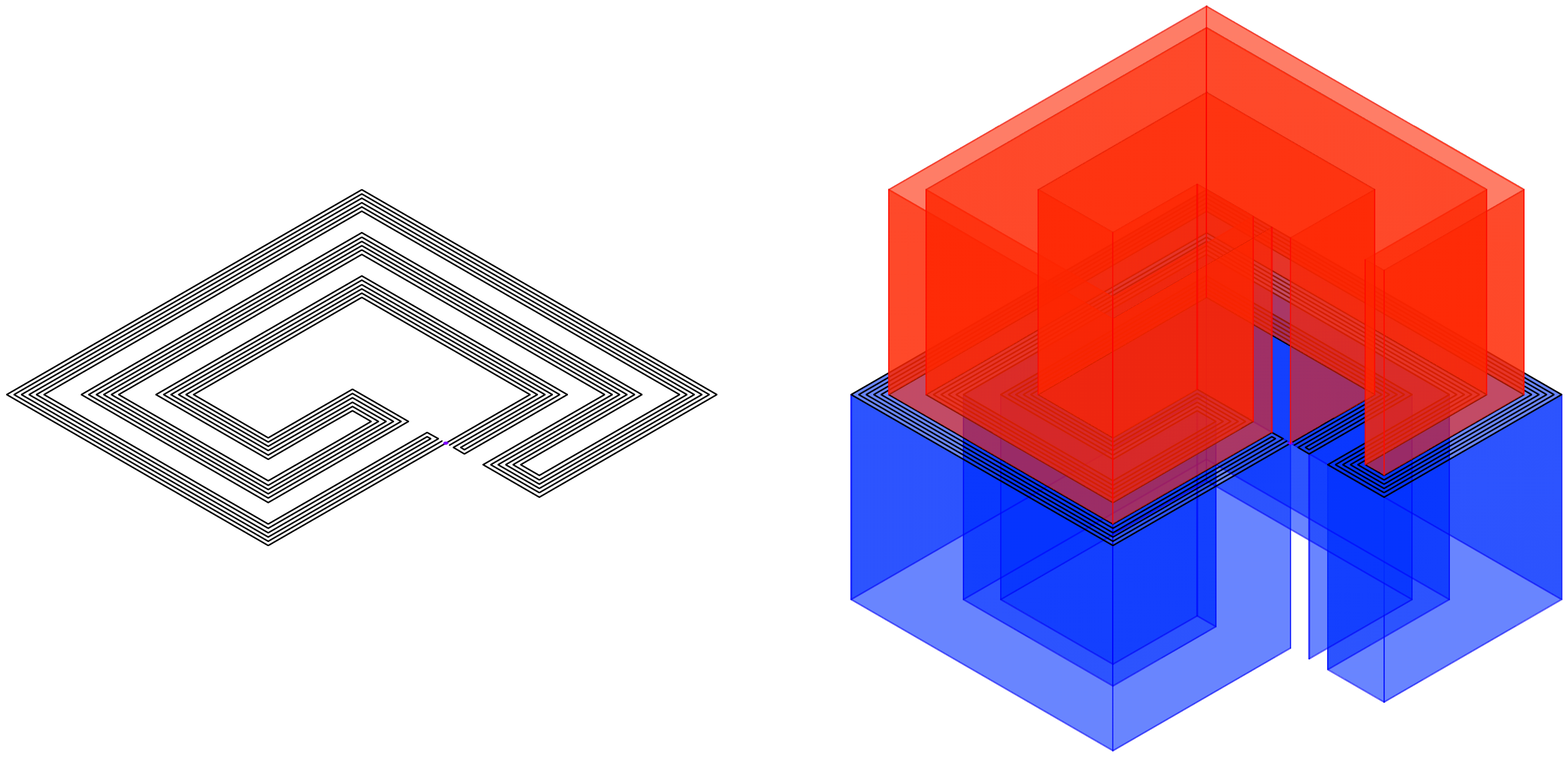} \;
      \caption[]{$\bigcup\{\dot S\cup R^n: -17\leq n\leq 16\}$}
        \label{fig:prob1_6_1}
    \end{minipage} 
     \begin{minipage}{0.49\textwidth}
     \vspace{5.8mm}
    \centering
       \includegraphics[scale=0.42]{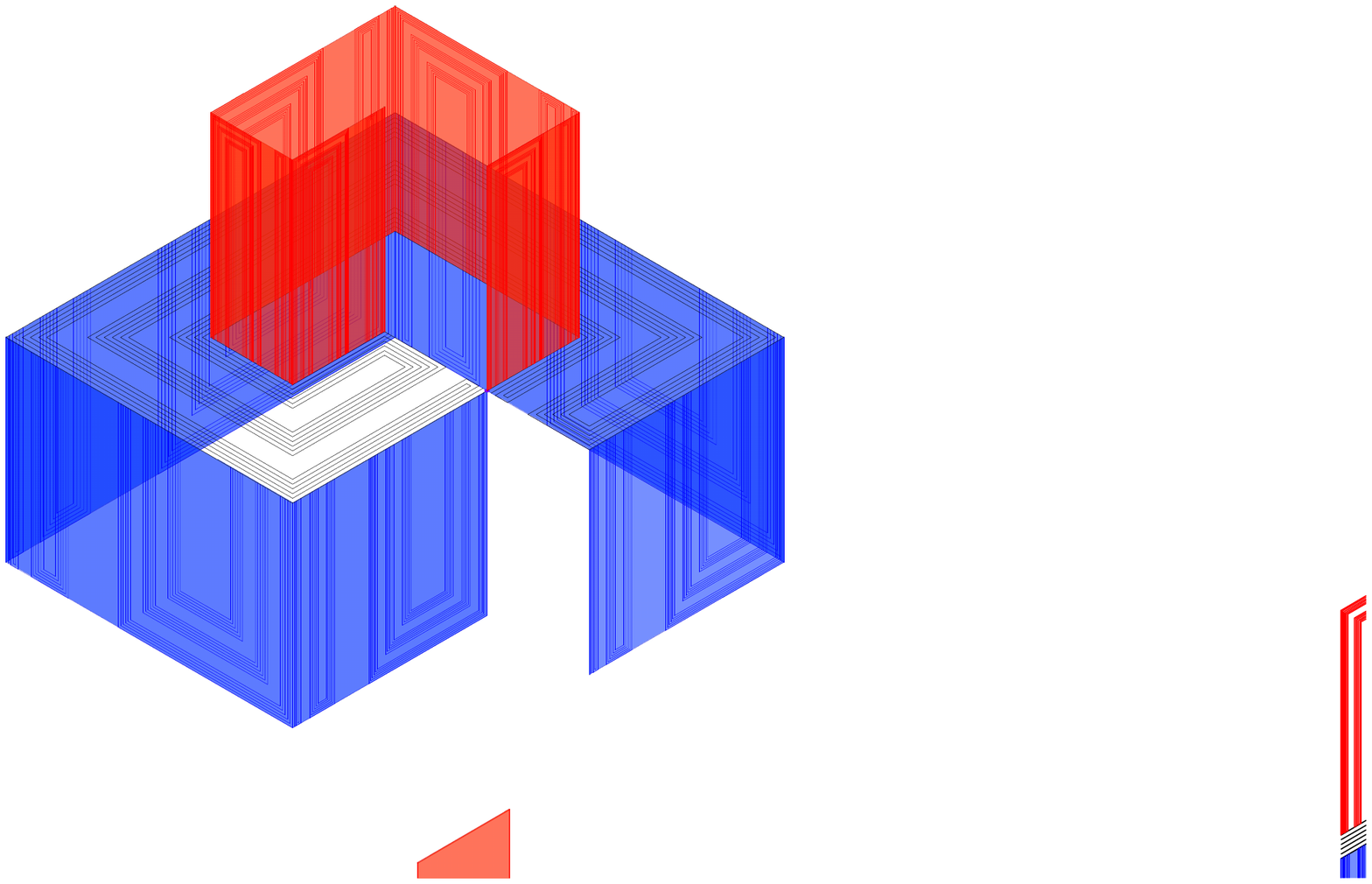}
   \caption{$\bigcup\{\dot S\cup T^n[W^n]:-5\leq n\leq 4\}$}
   \end{minipage}
\end{figure}

\subsection{Properties of $\widehat W$}

Verification of $\widehat W$'s properties will be kept fairly brief. The reader will be referred to \cite[Section 4, Example 4]{lip2} for supporting claims and proofs.

\begin{enumerate}
\item[i.] Since $W$ is connected and $\partial \color{blue}W_0\color{black}=\partial \color{red}W_1\color{black}=S\cap (\color{violet}[0,1]\times  \{\sfrac{1}{2}\}\color{black})$ is compact, Claims 1 and 2 in \cite[Example 4]{lip2} apply with $X'=W$ and $X_i'=W_i$.  They show the following.  

\noindent First, every clopen subset of the Cantor set $$\mathfrak C\coloneq (\{0\}\cup\{\sfrac{1}{n}:n=1,2,3,...\})\times [S\cap (\color{violet}[0,1]\times  \{\sfrac{1}{2}\}\color{black})]$$is eventually a product of clopen sets.  That is, if $C$ is a clopen subset of  $\mathfrak C$  then there is a sufficiently large positive integer $m$ such that $$C\cap ([0,\sfrac{1}{m}]\times S\cap (\color{violet}[0,1]\times  \{\sfrac{1}{2}\}\color{black}))=[0,\sfrac{1}{m}]\times \{s\in S\cap (\color{violet}[0,1]\times  \{\sfrac{1}{2}\}\color{black}):\langle \sfrac{1}{m},s\rangle\in C\}.$$Next, suppose $i\in \{0,1\}$ and let $\mathfrak W\supseteq \mathfrak C$ be the set $$(\{0\}\times W_i)\cup (\{\sfrac{1}{n}:n=1,2,3,...\}\times W_{1-i}).$$ If $A$ is a clopen subset  of $\mathfrak W$ such that $A\cap (\{0\}\times W_i)\neq\varnothing$, then,  letting $m$ be as above (for $C=A\cap \mathfrak C$), we find that $[A\cap (\{0\}\times W_i)]\cup [A\cap (\{\sfrac{1}{m}\}\times W_{1-i})]$ projects onto a clopen subset of $\{0\}\times W$.  Since $W$ is connected, $(\{0\}\times W_i)\subseteq A$.  This argument shows that $\{0\}\times W_i$ is contained in a quasi-component of $\mathfrak W$.  

\noindent Back in $\widehat W$,  density of the $P_i$ and  alignment of the $\color{blue}T^n\color{black}[\color{blue}W^n\color{black}]$ and $\color{red}T^m\color{black}[\color{red}W^m\color{black}]$  now imply that  each $T^n[W^n]$ is contained in a quasi-component of $\widehat W$ (each half of $W$ in $\widehat W$ is the ``limit'' of a sequence of complementary halves which are properly oriented  as in $\mathfrak W$). It follows   that $\widehat W$ has only one quasi-component, i.e. $\widehat W$ is connected (Claim 3 of \cite[Example 4]{lip2}). 

\item[ii.] The closure of $\widehat W$ is an indecomposable continuum by the facts that  $\dot S$ is indecomposable and  $S$ is irreducible between $\{0\}\times [0,1]$ and $\{1\}\times [0,1]$  (see the proof of Claim 4 in \cite[Example 4]{lip2}).  

Moreover, if $L$ is a proper subcontinuum of $\cl_{[0,1] ^3}\widehat W$, then its orthogonal projection into $\dot P$ is contained in an arc.  This means for every proper component  $X\subseteq \widehat W$ there exists $m\in \mathbb N$ such that $X\subseteq \bigcup \{T^n[W^n]:|n|\leq m\}$.  Each $T^n[W^n]$ is hereditarily disconnected, as are the joining sets $$(\{f(r_n)\}\times [0,1])\cap (T^{n-1}[W^{n-1}]\cup T^n[W^{n}]).\footnote{For certain $W$, it is possible that $(\{f(r_n)\}\times [0,1])\cap (T^{n-1}[W^{n-1}]\cup T^n[W^{n}])$ contains an arc. But with the `Knaster-Kuratowski' type $W$ we have $\widehat W\cap (\{f(r_n)\}\times [0,1])\simeq \mathbb Q$.} $$ Thus $\bigcup \{T^n[W^n]:|n|\leq m\}$ decomposes into a Cantor set of hereditarily disconnected fibers.   
It follows that $X$ is degenerate, establishing the widely-connected property of $\widehat W$.

\item[iii.] Every continuum containing $\widehat W$ is reducible between every two points of $\widehat W$. Indeed, let  $Y\supseteq \widehat W$ be any continuum, and let $p,q\in \widehat W$.  There exists $m<\omega$ such that $\{p,q\}\subseteq  \bigcup \{T^n[W^n]:|n|\leq m\}$. By the reasoning in part i, as well as the fact that the quasi-components of closed subsets of $Y$ are connected, $\bigcup \{T^n[W^n]:|n|\leq m\}$ is contained in a proper subcontinuum of $Y$.  So $Y$ is reducible between $p$ and $q$. For further details, see  the proof of Claim 5 in \cite[Example 4]{lip2}.   
\end{enumerate}

\subsection{Plane embedding}

Notice that a slight alteration to $\widehat W$ makes its closure  a chainable  continuum. Indeed, let $P$ be any composant of $S$ other than $P_0$ and $P_1$.  Then $P$ is a one-to-one continuous image of $(-\infty,\infty)$, and may be used instead of $\dot P$ to construct widely-connected set $\wideparen W \approx \widehat W $.  To see that $\text{cl}_{\mathbb R ^3} \wideparen{W}$ is chainable,  use chainability of the center template $S\times \{\sfrac{1}{2}\}$ together with chainablility of the vertical copies of $S$ in $\text{cl}_{\mathbb R ^3}\wideparen W$.  Every chainable continuum is planar, so   $\wideparen W$ embeds into the plane.


The continuum $\cl_{[0,1]^3}\widehat W$ is not chainable, but it is circle-like. Such a continuum does not automatically embed into the plane, but Bing \cite[Theorem 4]{bin} proved: \textit{If $X$ is a circle-like continuum and there is a sequence $\mathscr C_0, \mathscr C_1, . . .$  of circular chains covering $X$ such that  $\text{mesh}(\mathscr C_n)\to 0$ as $n\to\infty$, and $\mathscr C_{n+1}$ circles $\mathscr C_n$ exactly once, then $X$ homeomorphically embeds into the plane.} According to the definitions in Bing's paper, $X=\dot S$ is circularly chainable in the appropriate manner. Using chainability of the vertical copies of $S$ in $\cl_{[0,1]^3}\widehat W$, one can see that $\cl_{[0,1]^3}\widehat W$ also satisfies the hypothesis of Bing's theorem.

\section{$\leq 2$-point compactifications}

The interval $[0,1)$ is the only locally connected \& locally compact connected space whose  one-point compactification is irreducible.  For suppose $X$ and its one-point compactification $\alpha X\coloneq X\cup \{\infty\}$ have the stated properties. By   \cite[Theorem 4.1]{groot}, $\alpha X$ is locally connected.   Every locally connected irreducible continuum is homeomorphic to $[0,1]$  (cf.  \cite{kur}), so $\alpha X\simeq [0,1]$. Since $X$ is connected, $\infty$ corresponds to an endpoint of $[0,1]$.  Thus $X\simeq [0,1)$.   A similar  line of reasoning will show that $\mathbb R$ is the only locally connected \& locally compact connected set with an irreducible two-point compactification.

\section{Questions}

\begin{uq}Let $X$ be indecomposable and reducible.  If $p$ is a limit point of $X$ which is not in the closure of any proper component of $X$, then is $X\cup \{p\}$ necessarily (a)  indecomposable? (b) irreducible?\end{uq}

\begin{uq}Let $f:[0,\infty)\to \mathbb R ^2$ be a one-to-one mapping of the half-line into the plane, with range $X\coloneq f[0,\infty)$. If $\overline{f[n,\infty)}=\overline X$ for each $n<\omega$, then is $X\cup \{p\}$ irreducible for every  point $p\in \overline X\setminus X$?   \end{uq}


\begin{thebibliography}{HD}

\bibitem{bin}R.H. Bing, Embedding circle-like continua in the plane, Canad. J. Math. 14 (1962) 113--128.
\bibitem{eng3}R. Engelking, Dimension Theory, Volume 19 of Mathematical Studies, North-Holland Publishing Company, 1978.
\bibitem{eng}R. Engelking, General Topology, Revised and completed edition Sigma Series in Pure Mathematics 6, Heldermann  Verlag, Berlin, 1989.
\bibitem{groot}J. De Groot and  R.H. McDowell, Locally connected spaces and their compactifications. Illinois J. Math. 11, no. 3 (1967) 353--364. 
\bibitem{k}C. Kuratowski, Théorie des continus irréductibles entre deux points,  Fundamenta Mathematicae 3.1 (1922) 200--231.
\bibitem{kur}K. Kuratowski, Topology Vol. II, Academic Press, 1968.
\bibitem{lel}A. Lelek, Some problems concerning curves,  Colloquium Mathematicae 23.1 (1971) 93--98.
\bibitem{lip}D.S. Lipham, Widely-connected sets in the bucket-handle continuum,  Fundamenta Mathematicae 240 (2018) 161--174.
\bibitem{lip2}D.S. Lipham, On indecomposability of $\beta X$, Topology and its Applications,
Volume 243 (2018) 65-77.
\bibitem{rep}J. Mioduszewski, Spring 2004 Conference Report, Topology Proceedings, Volume 28 (2004) 301--326.
\bibitem{nad}S.B. Nadler, Continuum Theory, Marcel Dekker, New York, 1992.
\bibitem{swi} P.M. Swingle, Two types of connected sets, Bull. Amer. Math. Soc. 37 (1931) 254--258.
\bibitem{swi2} P.M. Swingle, Indecomposable connexes. Bull. Amer. Math. Soc. 47 (1941) 796--803. 
\bibitem{dou}E.K. van Douwen, Iterated quasicomponents of subspaces of rational continua, Topology and its Applications, Volume 51, Issue 2, 1993, Pages 81--85.
\end{thebibliography}
\end{document}